\newcommand{\lc}{\mathrm{lc}}
\newcommand{\ord}{\mathrm{ord}\,}
\newcommand{\ini}{\mathrm{in}\,}
\newcommand{\bC}{\mathbf{C}}
\newcommand{\bK}{\mathbf{K}}
\newcommand{\bN}{\mathbf{N}}
\newcommand{\bU}{\mathbf{U}}
\newcommand{\Zer}{\mathrm{Zer}}
\newcommand{\ch}{\mathrm{Char}}
\newcommand{\cont}{\mathrm{cont}}
\newtheorem{Theorem}{Theorem}[section]
\newtheorem{Corollary}[Theorem]{Corollary}
\newtheorem{Remark}[Theorem]{Remark}
\newtheorem{Property}[Theorem]{Property}
\newtheorem{Example}[Theorem]{Example}
\newtheorem{Lemma}[Theorem]{Lemma}
\newenvironment{proof}[1][Proof]{\textbf{#1.} }{\
\rule{0.5em}{0.5em}}
\begin{document}
\title{Decompositions of the  higher order polars of plane branches
\footnotetext{
       \begin{minipage}[t]{4.5in}
       {\small
       2000 {\it Mathematics Subject Classification:\/} 
       Primary 32S05;  Secondary 32S99.\\
       Key words and phrases:  
       irreducible plane curve, higher order polar, threshold semi-root.\\
       The first-named author was partially supported by the Spanish Project
       MTM2012-36917-C03-01 and the second author was partially supported by the \em{Plan Propio de Investigaci\'on de la Universidad de La Laguna-2014}.}
       \end{minipage}}}
      
\author{Evelia R.\ Garc\'{\i}a Barroso and Janusz Gwo\'zdziewicz}

\maketitle

\begin{abstract}
 
\noindent In \cite{Casas} Casas-Alvero found decompositions of  higher order polars 
of an irreducible plane curve generalizing the results of Merle. 
We improve his result giving a finer decomposition where we determine the  topological type and the number of a kind of branches that we call {\em threshold semi-roots}.
\end{abstract}

\section{Introduction}
 \noindent In \cite{Casas} Casas-Alvero found a decomposition of  higher order polars 
of an irreducible singular plane curve. Generalizing the results of  \cite{Merle}, he
proved that the irreducible components of the higher order polar curves 
of a plane branch $f(x,y)=0$ are 
 branches that have \textit{characteristic contacts} with  $f(x,y)=0$,
 which means that their contacts with $f(x,y)=0$ are the characteristic exponents 
$b_1/b_0,\ldots,b_h/b_0$ of $f(x,y)=0$. If the contact between a branch $h(x,y)=0$ and $f(x,y)=0$ 
is equal to the characteristic exponent  $b_i/b_0$
then  
$b_1/b_0,\ldots, b_{i-1}/b_0$ are  the first characteristic exponents  of $h(x,y)=0$.

\medskip

\noindent Casas-Alvero's decomposition of the $k$th higher order polar curve of $f(x,y)=0$ involves writing
$\frac{\partial ^k}{\partial y^k}f(x,y)$ as a finite product of power series, not necessarily irreducible,
called {\it bunches}, where each bunch is in turn the product of all irreducible factors 
of  $\frac{\partial ^k}{\partial y^k}f(x,y)$ having the same contact value with $f(x,y)=0$. 

\noindent
Note that with only the information about the contact value we cannot determine 
the equisingularity type (in the sense of Zariski) of the irreducible components 
of  $\frac{\partial ^k}{\partial y^k}f(x,y)=0$ from the equisingularity type of $f(x,y)=0$. 
It is well-known that the equisingularity type of the polar curve can vary in  a family of
equisingular branches. The family $\{f_a=y^3+x^{11}+ax^8y\}_{a\in \bC}$ (see \cite[Exemple 3]{Pham}) is equisingular;  the first polar curve of $f_a(x,y)=0$ has two different smooth branches for $a\neq 0$, but it has  a double smooth branch for $a=0$.

\bigskip

\noindent In this paper we refine Casas-Alvero's decomposition. We show that every Casas-Alvero's  bunch $\Gamma$ of $\frac{\partial ^k}{\partial y^k}f(x,y)$ is the product of two power series $\Gamma_1\cdot \Gamma_2$, where 
all irreducible factors of $\Gamma_2$ called {\em threshold semi-roots}, 
have the same Puiseux characteristic depending only on the Puiseux characteristic of $f(x,y)=0$. The remaining irreducible factors of $\Gamma$ constitute $\Gamma_1$. The existence of threshold semi-roots is a new phenomenon observed for the higher order polars, because we note that the first order polar does not have such branches. We also prove that the number of Newton-Puiseux roots of $\Gamma_1=0$ and $\Gamma_2=0$ depends only on the Puiseux characteristic of $f(x,y)=0$. 

\medskip

\noindent In \cite{LMW} the authors determine the possible components of  the exceptional divisor $E$ of the minimal resolution  of the branch $f(x,y)=0$  where the strict transform of $\frac{\partial}{\partial y}f(x,y)=0$ intersects $E$. For higher order polars the result of \cite{LMW} remains true and we make it precise  for threshold semi-roots: the strict transforms of branches defined by threshold semi-roots are smooth and intersect transversely ({\it curvetta}) the rupture components of the exceptional divisor $E$ (components of $E$ intersecting at least three other components).  We observe that threshold semi-roots are not  {\it semi-roots} (in the sense of Abhyankar). 
\medskip

\noindent The decomposition theorem of the first polar of a plane reduced curve $f(x,y)=0$ allowed   to describe in \cite{GB-T} the phenomenon of {\em Lipschitz-Killing curvature concentration } on the Milnor fiber
 $f(x,y) = \lambda \subseteq \bC^2 $ for 
$ \vert (x,y)\vert< \epsilon$ when $ \lambda,\epsilon \to 0,\, \vert \lambda \vert << \epsilon$. This is a multiscale phenomenon 
(as the multiscale phenomenon shown in Example \ref{GL}) depending only on the equisingularity type of the curve. It would be expected that the decomposition of the higher order polars presented in this paper will help in the description of the metric and topological properties of the fibers of singular complex analytic morphisms.

\medskip

\noindent 
In order to refine Casas-Alvero's factorization we deal with Newton-Puiseux roots 
of~$\frac{\partial ^k}{\partial y^k}f(x,y)$. 
For any characteristic exponent $q$ of $f$ we count the number of 
roots that have a contact $q$ with $f$. Moreover our approach allows to find 
the coefficients $c_q$ of the monomial $x^q$ in these roots. 
The Newton-Puiseux roots with $c_q=0$ are  the roots of $\Gamma_1=0$ 
and the others are the roots of $\Gamma_2=0$.

\medskip
\noindent All the results of this paper remain true if we replace $\bC$ by any algebraically closed field  $\bK$ of characteristic zero.

\section{ Formal Puiseux power series}
 Denote by $\bC[[x]]^*$ the set of formal Puiseux power series. The {\em order} of any nonzero formal Puiseux power series is the minimal degree of its terms. By convention the order of the zero formal Puiseux power series is $+\infty$.
For every $\phi,\psi\in \bC[[x]]^*$ we  define $O(\phi,\psi)$ to be
the order of the difference $\phi-\psi$ and we call it 
the {\em contact order} of $\phi$ and $\psi$. 
It is well-known that for any $\phi_1,\phi_2,\phi_3\in \bC[[x]]^*$ the {\em Strong Triangle Inequality (STI)}
$O(\phi_1,\phi_3)\geq \min \{O(\phi_1,\phi_2),O(\phi_2,\phi_3)\}$ holds.
\medskip

\noindent  Let $\alpha \in\bC[[x]]^{*}$ and $r$ be a positive rational number. The set $B=\{\,\psi \in\bC[[x]]^{*}: O(\alpha,\psi) \geq r \,\}$ is called a \textit{pseudo-ball of height} $r$. Note that any two pseudo-balls of height $r$ are either disjoint or are equal. To prove it observe  that by STI if  $O(\alpha_1,\phi), O(\alpha_2,\phi), O(\alpha_1,\psi)\geq r$ then $O(\alpha_2,\psi) \geq r$. Hence if 
the pseudo-balls $\{\psi \in\bC[[x]]^{*}: O(\alpha_1,\psi) \geq r \,\}$ and $\{\psi \in\bC[[x]]^{*}: O(\alpha_2,\psi) \geq r \,\}$ have a non empty intersection  then they  are  equal.

\medskip

 \noindent Take a pseudo-ball $B$ of height $r$. Every formal Puiseux power series $\gamma(x)\in B$  has the form
$\gamma(x)=\lambda_B(x)+c_{\gamma}x^r+ \hbox{\em higher order terms}$, where $\lambda_B(x)$ is obtained from an arbitrary $\alpha(x)\in B$ by omitting all its terms of order bigger than or equal to $r$.  We call the number $c_{\gamma}$ the {\em leading coefficient of } $\gamma$ {\em with respect to} $B$ and denote it $\lc_B \gamma$. Remark that $c_{\gamma}$ can be zero.

\medskip
\noindent Hereinafter, for brevity, formal Puiseux power series will be called Puiseux series.

\section{Newton-Puiseux roots of higher order polars}

\noindent Let $f(x,y)\in \bC[[x,y]]$ be such that $1<\ord f(0,y)=n<+\infty$. Fix a positive integer $k<n$.  Then the order of  $\frac{\partial ^k}{\partial y^k} f(0,y)$ equals $n-k$.  
The Newton-Puiseux factorizations of $f(x,y)$ and $\frac{\partial ^k}{\partial y^k} f(x,y)$ have the form

\[
f(x,y)= u(x,y)\prod_{i=1}^n (y-\alpha_i(x)), 
\]
\begin{equation}
  \label{ppp2}
\frac{\partial^k}{\partial y^k}f(x,y)=\tilde u(x,y)\prod_{j=1}^{n-k} (y-\gamma_j(x)),
\end{equation}
where 
$u(x,y)$, $\tilde u(x,y)$ are units in  $\bC[[x,y]]$ and $\alpha_i(x)$, $\gamma_j(x)$
are Puiseux series of positive order called {\em Newton-Puiseux roots} of $f(x,y)=0$ and $\frac{\partial^k}{\partial y^k}f(x,y)=0$, respectively. We denote by $\Zer g$ the set of Newton-Puiseux roots of $g(x,y)=0$ for any $g(x,y)\in 
\bC[[x,y]]$.

\medskip

\noindent Let $B$ be a pseudo-ball. We put

\[
F_B(z):=\prod_{j\;:\;\alpha_j\in B}(z-\lc_B \alpha_j).
\]

\noindent Remark that the above polynomial is equal up to multiplication by a constant, 
 to the polynomial introduced in \cite[Lemma 3.3]{Gwo} (see also \cite[ Formula~(2.2)]{K-P}).

\medskip

\begin{Lemma}
\label{GL}
Let $B$ be a pseudo-ball. Assume that $k< \deg F_B(z)$. Then 
\[
\frac{d^k}{d z^k}F_B(z)= \hbox{\rm constant} \cdot \prod_{j\;:\;\gamma_j\in B}(z-\lc_B \gamma_j).
\]
\end{Lemma}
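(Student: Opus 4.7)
The plan is to perform the substitution $y=\lambda_B(x)+zx^r$ into both $f$ and $\partial^k f/\partial y^k$ and compare leading terms in $x$ on both sides of the chain-rule identity
\[
\frac{\partial^k}{\partial z^k}\bigl[f(x,\lambda_B(x)+zx^r)\bigr] \;=\; x^{rk}\,\frac{\partial^k f}{\partial y^k}\bigl(x,\lambda_B(x)+zx^r\bigr).
\]
After the substitution, each side is a Puiseux series in $x$ whose coefficients are polynomials in $z$.

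Using the Newton-Puiseux factorization $f=u\prod_{i=1}^n(y-\alpha_i)$, I would analyze each factor separately. For $\alpha_i\in B$ the difference $\alpha_i-\lambda_B$ has $x$-order at least $r$ with coefficient of $x^r$ equal to $\lc_B\alpha_i$, so
\[
\lambda_B(x)+zx^r-\alpha_i \;=\; (z-\lc_B\alpha_i)\,x^r + \text{terms of strictly higher }x\text{-order}.
\]
For $\alpha_i\notin B$, instead, $\lambda_B-\alpha_i$ has $x$-order $r_i<r$ with some nonzero leading coefficient $\beta_i\in\bC$ independent of $z$, and adding $zx^r$ does not affect this leading term. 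Multiplying all factors together with the unit $u$ (which contributes the nonzero constant $u(0,0)$), one obtains
\[
f(x,\lambda_B(x)+zx^r) \;=\; x^{s}\bigl(c\,F_B(z)+R(x,z)\bigr),
\]
where $s=rN_B+\sum_{\alpha_i\notin B}r_i$ with $N_B=\deg F_B$, the constant $c$ is nonzero, and $R(x,z)$ has strictly positive $x$-order. Applying $\partial_z^k$ and using the hypothesis $k<\deg F_B$ to ensure $F_B^{(k)}\not\equiv 0$, the leading-in-$x$ coefficient of the left-hand side of the chain-rule identity becomes $c\,F_B^{(k)}(z)$ at $x$-order $s$.

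The identical computation applied to the Newton-Puiseux factorization of $\partial^k f/\partial y^k$ produces
\[
\frac{\partial^k f}{\partial y^k}\bigl(x,\lambda_B(x)+zx^r\bigr) \;=\; x^{t}\Bigl(c'\prod_{\gamma_j\in B}(z-\lc_B\gamma_j)+R'(x,z)\Bigr),
\]
with $c'\neq 0$ and $R'$ of strictly positive $x$-order. Multiplying by $x^{rk}$ and matching lowest-$x$-order coefficients on both sides of the chain-rule identity then forces $s=rk+t$ and
\[
c\,F_B^{(k)}(z) \;=\; c'\prod_{\gamma_j\in B}(z-\lc_B\gamma_j),
\]
which is the claim of the lemma. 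Comparing degrees in $z$ also yields, as a by-product, the identity $\#\{j:\gamma_j\in B\}=\deg F_B-k$.

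The main subtle point is to justify that extracting the lowest-$x$-order coefficient commutes with $\partial_z^k$; this hinges precisely on $F_B^{(k)}$ being a nonzero polynomial, which is guaranteed by the hypothesis $k<\deg F_B$. Without this hypothesis, the leading $z$-polynomial coefficient would vanish after differentiation and one would have to chase contributions from strictly higher $x$-orders, spoiling the clean identification.
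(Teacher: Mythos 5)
Your argument is correct and is essentially the paper's own proof in different clothing: the paper takes the weighted initial form with $\omega(x)=1$, $\omega(y)=r$ after translating by $\lambda_B$, which is exactly your extraction of the lowest-$x$-order coefficient under the substitution $y=\lambda_B(x)+zx^r$, and the paper's key step that $\frac{\partial^k}{\partial y^k}\ini_{\omega} f=\ini_{\omega}\frac{\partial^k}{\partial y^k} f$ when the former is nonzero is precisely your observation that taking the leading coefficient commutes with $\partial_z^k$ because $k<\deg F_B(z)$ forces $F_B^{(k)}\not\equiv 0$.
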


\noindent  \begin{proof}
Let $r$ be the height of $B$. Fix the weight $\omega$ such that $\omega(x)=1$, $\omega(y)=r$ 
and denote by $\ini_{\omega}(h)$ the weighted initial part of $h\in \bC[[x^{1/N},y]]$, where $N\in \bN$. 
First assume that $\lambda_B(x)=0$. Then 
\[
\ini_{\omega}f(x,y)=\hbox{constant } x^A\prod_{i\;:\alpha_i\in B}(y-\lc_B \alpha_i \cdot x^{r}),
\]
and
\[
\ini_{\omega}\frac{\partial^k}{\partial y^k}f(x,y)=\hbox{constant } x^{A'}\prod_{j\;:\gamma_j\in B}(y-\lc_B  \gamma_j \cdot x^{r}),
\]
 where $A,A'$ are rational numbers. 
 If $k\leq \deg F_B(z)$ then $\frac{\partial^k}{\partial y^k} \ini_{\omega} f(x,y)$ is nonzero and consequently $\frac{\partial^k}{\partial y^k} \ini_{\omega} f(x,y)=\ini_{\omega}\frac{\partial^k}{\partial y^k} f(x,y)$. 
For $x=1$  we get 
\[
\frac{d^k}{dy^k}\prod_{i\;:\alpha_i\in B}(y-\lc_B \alpha_i)=\hbox{constant } \cdot \prod_{j\;:\gamma_j\in B}(y-\lc_B \gamma_j ).
\]

\noindent If $\lambda_B(x)\neq 0$ then taking $g(x,y):=f(x,y+\lambda_B(x))$
we reduce the proof to the first case. 
\end{proof}

\section{Properties of branches}

\noindent Denote by $\bU_m$ the multiplicative group of the $m$th complex roots of unity. This group acts on $\bC[[x^{1/m}]]$  in the following way: for  $\epsilon \in \bU_m$  and $\alpha=\sum_ia_i x^{i/m}$  

\begin{equation}
\label{star}
\epsilon *_m\alpha=\sum_ia_i\epsilon^ix^{i/m}.
\end{equation}

\noindent The {\em star operation} defined in (\ref{star})  preserves the contact, that is
$O(\alpha_1,\alpha_2)=O(\epsilon*_m\alpha_1,\epsilon*_m\alpha_2)$.
\medskip

\noindent Let $\alpha$ be a Puiseux series. The smallest natural number $n$ such that $\alpha\in \bC[[x^{1/n}]]$ is called the {\em index} of $\alpha$. Denote by $*$  the star operation of $\bU_n$ on $\bC[[x^{1/n}]]$ introduced in (\ref{star}).
\noindent 
Observe that if the  Puiseux series $\alpha$ has index $n$ then  $\epsilon_1 *\alpha\neq \epsilon_2 *\alpha$, for any two different $n$-th roots of the unity  $\epsilon_1,\epsilon_2$ (see \cite[Lemma 3.9]{Hefez}).

\medskip

\noindent For  a Puiseux series $\alpha=\sum_ia_i x^{i/n}$ of positive order  and index $n$ we introduce two sequences $(e_i)$ and $(b_i)$ of natural numbers as follows:
\begin{itemize}
\item $e_0=b_0=n$,
\item if $e_k\neq 1$ then 
$b_{k+1}:=\min \{i\;:\;  i\not\equiv 0\; \hbox{\rm mod } e_{k}\; \mbox{and } a_i\neq 0\}$,
\item $e_k=\gcd(e_{k-1},b_k)$.
\end{itemize}

\noindent The sequence $e_i$ is strictly decreasing and for some $h\in \bN$ we have $e_h=1$. We get
$\bU_n=\bU_{e_0} \supset \bU_{e_1}  \supset \cdots  \supset \bU_{e_h}=\{1\}$. After \cite[Lemma 6.8]{Hefez} if $\epsilon \in \bU_{e_{k-1}}\backslash  \bU_{e_{k}}$ then $\epsilon^{b_k}\neq 1$. Consequently 
\begin{equation}
\label{contact}
O(\alpha,\epsilon*\alpha)=\frac{b_k}{n} \;\;\hbox{\rm for } \epsilon \in \bU_{e_{k-1}}\backslash \bU_{e_k}.
\end{equation}

\medskip

\noindent  Let  $\alpha$ be a  Puiseux series  of index $n$ which is a Newton-Puiseux root of an irreducible power series $f(x,y)\in \bC[[x,y]]$. Then $\Zer f=\{\epsilon * \alpha \;:\; \epsilon \in \bU_n\}$ and consequently $\ord f(0,y)=n$ (see \cite[Theorem 3.10]{Hefez}). The {\em characteristic} of an irreducible power series $f(x,y)\in \bC[[x,y]]$ is the sequence $(b_0,b_1,\ldots,b_h)$,  associated to any Newton-Puiseux root of $f$. By (\ref{contact}) the set $\ch f:=\left \{\frac{b_1}{b_0},\cdots,  \frac{b_h}{b_0}\right\}$ is the set of contacts between the Newton-Puiseux roots of $f$. We call  $\ch f$ the set of {\em characteristic exponents} of $f$.

\medskip

\noindent Let $T_i(f)$ be the set of pseudo-balls of height $\frac{b_i}{b_0}$ having non-empty intersection with $\Zer f$.

\begin{Property}\label{PR2}
For every characteristic exponent $b_i/b_0$ the set $T_i(f)$ consists of $e_0/e_{i-1}$ 
pairwise disjoint pseudo-balls. 
Every $B\in T_i(f)$  contains $e_{i-1}$ elements of $\Zer f$ and 
$F_B(z)=(z^{e_{i-1}/e_i}-c_B)^{e_i}$ for some $c_B\neq 0$.
\end{Property}

\noindent \begin{proof} Let  $B\in T_i(f)$  and 
 $\alpha \in B\cap \Zer f$. By (\ref{contact}) $B\cap \Zer f=\{\epsilon * \alpha \;:\; \epsilon \in \bU_{e_{i-1}}\}$, which shows that $B$ contains $e_{i-1}$ elements of $\Zer f$. Consequently $T_i(f)$ consists of $e_0/e_{i-1}$ 
pairwise disjoint pseudo-balls.

\noindent We get 
\[
F_B(z)=
\prod_{\epsilon^{e_{i-1}}=1}(z-\lc_B(\epsilon * \alpha))=
\prod_{\epsilon^{e_{i-1}}=1}(z-\epsilon^{b_i}a_{b_i})=
(z^{e_{i-1}/e_i}-a_{b_i}^{e_{i-1}/e_i})^{e_i},
\]
where $a_{b_i}$ is the coefficient of the monomial $x^{b_i/b_0}$ 
of $\alpha$. The last equality follows from \cite[Lemma 3.4]{Acta}.
\end{proof}

\medskip

\newpage

\begin{Example}
\label{GL}
Consider the irreducible  complex convergent power series
$f(x,y)=((y^3-x^4)^4+x^{17}y^3)^2+x^{22}(y^3-x^4)^5$ 
of characteristic $(24,32,62,137)$.  
Let $\alpha_i(x)$, $i=1,\dots, 24$ be the Newton-Puiseux roots of $f(x,y)=0$. 

\noindent Take $\epsilon>0$ small enough.
A higher contact order between $\alpha_i(x)$, $\alpha_j(x)$ means a smaller Euclidean distance 
between $\alpha_i(\epsilon)$, $\alpha_j(\epsilon)$. Thus the pseudo-balls of~$T_i(f)$, $i=1,2,3$ correspond to groups of roots of $f(\epsilon,y)=0$. 
These roots, for $\epsilon=0.75$, are drawn on the left side of  Figure 1.  

\noindent Fix $B\in T_2(f)$ and $E\in T_3(f)$. 
Using \cite[Lemma 3.3]{Gwo} one can show that there are constants 
$C_{\epsilon},D_{\epsilon}\in \bC $ such that,  
\begin{equation}
\label{convergence}
\begin{array}{ll}
C_{\epsilon}\cdot f(\epsilon, \lambda_B(\epsilon)+z\cdot \epsilon^{62/24}) \rightarrow F_B(z),  \;&
\\
D_{\epsilon}\cdot f(\epsilon, \lambda_E(\epsilon)+z\cdot \epsilon^{137/24}) \rightarrow F_E(z),  \;&\end{array}
\end{equation}
when  $\epsilon \rightarrow 0$.
This asymptotic property is illustrated on the right side of  Figure 1. 
Notice that by Property~\ref{PR2} we have 
$F_B(z)=(z^4-c_1)^2$ and $F_E(z)=z^2-c_2$ for some nonzero constants $c_1$, $c_2$. 

\begin{center}
\begin{tikzpicture}[scale=0.35]
\draw[->,thin] (-11,0)--(11,0) ;
\draw[->,thin] (0,-11)--(0,11) ;
\node[draw,circle,inner sep=0.6pt,fill=black] at (-4.984,-6.375){};  
\node[draw,circle,inner sep=0.6pt,fill=black] at (-4.984,6.375){};  
\node[draw,circle,inner sep=0.6pt,fill=black] at (-4.739,-6.348){};  
\node[draw,circle,inner sep=0.6pt,fill=black] at (-4.734,6.348){};  
\node[draw,circle,inner sep=0.6pt,fill=black] at (-3.983,-4.477){};  
\node[draw,circle,inner sep=0.6pt,fill=black] at (-3.983,4.477){}; 
\node[draw,circle,inner sep=0.6pt,fill=black] at (-3.719,-4.132){};  
\node[draw,circle,inner sep=0.6pt,fill=black] at (-3.719,4.132){}; 
\node[draw,circle,inner sep=0.6pt,fill=black] at (-3.127,-7.279){};  
\node[draw,circle,inner sep=0.6pt,fill=black] at (-3.127,7.279){};  
\node[draw,circle,inner sep=0.6pt,fill=black] at (-3.029,-7.504){};  
\node[draw,circle,inner sep=0.6pt,fill=black] at (-3.029,7.504){};  
\node[draw,circle,inner sep=0.6pt,fill=black] at (-1.885,-5.687){};  
\node[draw,circle,inner sep=0.6pt,fill=black] at (-1.885,5.687){}; 
\node[draw,circle,inner sep=0.6pt,fill=black] at (-1.719,-5.287){};  
\node[draw,circle,inner sep=0.6pt,fill=black] at (-1.719,5.287){}; 
\node[draw,circle,inner sep=0.6pt,fill=black] at (5.438,-1.154){};  
\node[draw,circle,inner sep=0.6pt,fill=black] at (5.438,1.154){};  
\node[draw,circle,inner sep=0.6pt,fill=black] at (5.868,-1.212){};  
\node[draw,circle,inner sep=0.6pt,fill=black] at (5.868,1.212){};  
\node[draw,circle,inner sep=0.6pt,fill=black] at (7.867,-0.931){};  
\node[draw,circle,inner sep=0.6pt,fill=black] at (7.867,0.931){}; 
\node[draw,circle,inner sep=0.6pt,fill=black] at (8.013,-1.129){};  
\node[draw,circle,inner sep=0.6pt,fill=black] at (8.013,1.129){}; 
\draw [olive](0,0) circle (9.8cm);
\draw [blue](6.75,0) circle (2.5cm);
\draw [blue](-3.35,5.82) circle (2.5cm);
\draw [blue](-3.35,-5.82) circle (2.5cm);
\draw [red](5.64,1.18) circle (0.5cm);
\draw [red](5.64,-1.18) circle (0.5cm);
\draw [red](7.95,1.05) circle (0.5cm);
\draw [red](7.95,-1.05) circle (0.5cm);
\draw [red](-4.85,6.35) circle (0.5cm);
\draw [red](-3.84,4.30) circle (0.5cm);
\draw [red](-3.08,7.39) circle (0.5cm);
\draw [red](-1.8,5.48) circle (0.5cm);
\draw [red](-4.85,-6.35) circle (0.5cm);
\draw [red](-3.84,-4.30) circle (0.5cm);
\draw [red](-3.08,-7.39) circle (0.5cm);
\draw [red](-1.8,-5.48) circle (0.5cm);
\draw [olive](5.1,9.4) node[above]{$T_1(f)=\hbox{\rm the olive ball}$} ;
\draw [blue](5,4.8) node[above]{$T_2(f)=\hbox{\rm 3 blue balls}$};
\draw [red](5,2.8) node[above]{$T_3(f)=\hbox{\rm 12 red balls}$} ;
\draw [blue](5.6,-3.5) node[above]{$B$} ;
\draw [red](7.3,1.1) node[above]{$E$} ;
\draw (15,-3.5) node[above]{$\hbox{\rm roots of }F_B$} ;
\draw (20.5,-3.5) node[above]{$\hbox{\rm roots of }F_E$} ;
\draw (6.5,-13.5) node[above]{\hbox{\rm Figure 1}} ;
\draw[->,thin] (13,0)--(17,0) ;
\draw[->,thin] (15,-2)--(15,2) ;
\node[draw,red, circle,inner sep=1pt,fill=red] at (15.7,0.7){}; 
\node[draw,red, circle,inner sep=1pt,fill=red] at (15.7,-0.7){}; 
\node[draw,red, circle,inner sep=1pt,fill=red] at (14.3,0.7){}; 
\node[draw,red, circle,inner sep=1pt,fill=red] at (14.3,-0.7){}; 
\draw [blue](15,0) circle (1.5cm);
\draw[->,thin] (18,0)--(22,0) ;
\draw[->,thin] (20,-2)--(20,2) ;
\node[draw,circle,inner sep=1pt,fill] at (19.75,-0.35){}; 
\node[draw,circle,inner sep=1pt,fill] at (20.25,0.35){}; 
\draw [red](20,0) circle (1cm);
\end{tikzpicture}
\end{center}

\noindent The convergence in (\ref{convergence}) is almost uniform. Hence there are similar limits for higher derivatives. This explains  why we could detect the position of the roots of the $k$th derivative of $f$ by the position of the roots of the polynomials $F^{(k)}_B(z)$.

\end{Example}

\subsection{Contact of branches}

\noindent Let $f, g\in\bC[[x,y]]$ be  irreducible power series
coprime with $x$.
For every Puiseux series $\gamma$ we define 
$\cont(f,\gamma)=\max\{ O(\alpha,\gamma):\alpha\in\Zer f\}$
and call this number the {\em contact} between $f$ and $\gamma$. By abuse of notation we put 
\[ \cont(f,g)=\max\{ O(\alpha,\gamma):\alpha\in\Zer f,\gamma\in\Zer g\}. \]

\noindent In this section we take $m\in \bN$ such that $\Zer f,\Zer g\subset \bC[[x^{1/m}]]$  and we consider the star operation $*_m$ of $\bU_m$ in $\bC[[x^{1/m}]]$ introduced in (\ref{star}). If $\alpha=\sum_ia_i x^{i/n}$ is a Newton-Puiseux root of $f(x,y)=0$ of index $n$ then $m=q n$ for some $q \in \bN$. Then $\alpha=\sum_ia_i x^{i q/m}$ and $\theta *_m\alpha=\theta^q *_n \alpha$, where $*_n$ is the star operation of $\bU_n$ on $\bC[[x^{1/n}]]$. Since  $\bU_n=\{\theta^q\;:\;\theta\in \bU_m\}$, the action of $\bU_m$ permutes $\Zer f$ and  for every $\alpha, \alpha'\in \Zer f$ there exists $\epsilon \in \bU_m$ such that $\alpha'=\epsilon *_m \alpha$. Up to the end of this section we denote $*_m$ by $*$.

\begin{Property}
\label{contact-roots}
For every $\gamma\in\Zer g$\quad $\cont(f,\gamma)=\cont(f,g)$. 
\end{Property}

\noindent \begin{proof}
It is enough to show that for all $\gamma,\gamma'\in\Zer g$ the sets 
of contact orders
$\{O(\alpha,\gamma):\alpha\in\Zer f\}$ and 
$\{O(\alpha,\gamma'):\alpha\in\Zer f\}$ are equal. 

\noindent Take $\epsilon\in \bU_m$ such that $\gamma'=\epsilon* \gamma$. 
Then $O(\alpha,\gamma)=O(\epsilon*\alpha,\gamma')$ for all $\alpha\in\Zer f$.
Since the action of $\bU_m$ permutes $\Zer f$ the sets  under consideration are equal.
\end{proof}

\begin{Property}
\label{contact-exponents}
For every $q<\cont(f,g)$\quad $ q\in\ch f$ if and only if $q\in\ch g$.
\end{Property}

\noindent \begin{proof}
Let $q<\cont(f,g)$ be a characteristic exponent of $f$. By definition of 
the characteristic exponent $O(\alpha,\alpha')=q$ for some $\alpha, \alpha'\in\Zer f$. 
Following Property~\ref{contact-roots}
$\cont(g,\alpha)=\cont(g,\alpha')=\cont(g,f)$. 
Hence there exist $\gamma,\gamma'\in\Zer g$ such that 
$O(\gamma,\alpha)=O(\gamma',\alpha')=\cont(g,f)=\cont(f,g)$. 
By STI we get 
$O(\gamma,\gamma')\geq\min\{O(\gamma,\alpha),O(\alpha,\alpha'),O(\alpha',\gamma')\}=q$. 
Suppose that $O(\gamma,\gamma')>q$. Then  we would have 
$O(\alpha,\alpha')\geq\min\{O(\alpha,\gamma),O(\gamma,\gamma'), O(\gamma',\alpha')\}>q$
which is absurd. Hence $q=O(\gamma,\gamma')$ is a characteristic exponent of $g$. 
\end{proof}

\medskip

 \noindent Let $\alpha=\sum_ia_ix^{i/n} \in \bC[[x]]^*$ be a Puiseux  series. The {\em support of} $\alpha$ is the set $\{i/n\;:\;a_i\neq 0\}$.

\begin{Property}
\label{contact-nonzero}
If $q=\cont(f,g)$ is a characteristic exponent of $f$ and there exists 
a  Puiseux series $\gamma\in\Zer g$ such that $q$ is in the support of $\gamma$  then $q$ is a characteristic exponent of $g$.
\end{Property}

\noindent \begin{proof}
Take $\alpha,\alpha'\in\Zer f$ such that $O(\alpha,\gamma)=O(\alpha,\alpha')=q$ and  let
 $\epsilon\in \bU_m$ be such that $\alpha'=\epsilon * \alpha$. 
Put $\gamma'=\epsilon *\gamma$. 

\noindent By STI we get 
$O(\gamma,\gamma')\geq\min\{O(\gamma,\alpha),O(\alpha,\alpha'),O(\alpha',\gamma')\}=q$. 
The equality $O(\alpha,\alpha')=q$ implies that $\epsilon *x^q\neq x^q$. 
Thus the monomial $x^q$ appears in the difference $\gamma-\epsilon * \gamma$ with a nonzero 
coefficient which proves that $O(\gamma,\gamma')=q$. Therefore $q\in\ch g$.
\end{proof}

\section{Roots of derivatives of special polynomials}

 In this section we study the roots of the complex polynomial  
$\frac{d^k}{dz^k}(z^n-c)^e$.

\begin{Property}\label{P1}
Let $F(t)=H(t^n)$ be a complex polynomial. 
If $t_0$ is a nonzero root of~$F(t)$ of multiplicity $m$ then $(t^n-t_0^n)^m$ divides $F(t)$.
\end{Property}
\begin{proof}
It is enough to factorize $F(t)$ in the ring $\bC[t^n]$ and notice that $t_0$ is a 
root of a factor  $t^n-a$ if and only if $a=t_0^n$.
\end{proof}

\begin{Lemma}\label{L1}
Let $F(t)$ be a real polynomial of positive degree of the form 
\[F(t)=C \cdot t^{a}(t^n-1)^{b}\prod_{i=1}^{d}(t^n-c_i), \]

\noindent where $a$, $b$, $d$ are nonnegative integers 
and $c_i$ are pairwise distinct real numbers from the interval $(0,1)$. 
Then the derivative of $F(t)$ has the form

\[F'(t)=C' \cdot t^{a'}(t^n-1)^{b'}\prod_{i=1}^{d'}(t^n-c_i'),\]

\noindent where $c_i'$ are pairwise distinct real numbers from the interval $(0,1)$.
Moreover:
if $a>0$ then $a'=a-1$, 
if $a=0$ then $a'=n-1$,
if $b>0$ then $b'=b-1$,
if $b=0$ then $b'=0$.
\end{Lemma}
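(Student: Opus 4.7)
I would begin by exploiting the fact that $F(t)$ is a monomial in $t$ times a polynomial in $t^n$: write $F(t)=t^{a}H(t^{n})$ with $H(s)=C(s-1)^{b}\prod_{i=1}^{d}(s-c_i)$, and differentiate to obtain
\[
F'(t)=\begin{cases} t^{a-1}\bigl(aH(t^{n})+nt^{n}H'(t^{n})\bigr) & \text{if } a\geq 1,\\[2pt] n\,t^{n-1}H'(t^{n}) & \text{if } a=0.\end{cases}
\]
In both cases $F'(t)=t^{a'}P(t^{n})$ for a real polynomial $P$ and the integer $a'$ stated in the lemma. In view of Property~\ref{P1}, which organises the nonzero zeros of a polynomial in $t^n$ into $n$-th root orbits, it suffices to prove that $P$ factors as $C'(s-1)^{b'}\prod_{i}(s-c_i')$ with the $c_i'$ pairwise distinct in $(0,1)$.

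I would next pin down $a'$ and $b'$ by evaluating $P$ at $s=0$ and $s=1$. For $a\geq 1$ we have $P(0)=aH(0)=aC(-1)^{b+d}\prod c_i\neq 0$, and for $a=0$ a direct computation shows $H'(0)\neq 0$; this confirms that the exponent of $t$ in $F'$ is exactly $a'$. For $b'$: when $b>0$ the factorisations of $H$ and $H'$ show that $s=1$ is a zero of $P$ of multiplicity exactly $b-1$ (in both cases $a\geq 1$ and $a=0$); when $b=0$ the positivity of $H(1)=C\prod(1-c_i)$ and $H'(1)=C\sum_{j}\prod_{i\neq j}(1-c_i)$, which holds since $c_i\in(0,1)$, gives $P(1)\neq 0$, so $b'=0$.

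The heart of the argument is showing that the remaining zeros of $P$ are real, simple, and located in $(0,1)$. For this I would apply Rolle's theorem to the nonnegative real roots of $F$, namely $t=0$ (multiplicity $a$), the distinct reals $c_1^{1/n}<\cdots<c_d^{1/n}$, and $t=1$ (multiplicity $b$). Between each two consecutive distinct roots Rolle produces at least one zero of $F'$ lying in $(0,1)$; via the substitution $s=t^n$ these yield at least $d+1$, $d$, $d$, or $d-1$ distinct real zeros of $P$ in $(0,1)$ in the four subcases determined by whether $a$ and $b$ vanish. Combined with the multiplicity at $s=1$ computed above, the total matches $\deg P$ on the nose in each subcase. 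By a degree count, there can be no further zeros of $P$ on or off the real axis, each Rolle zero is simple, and the multiplicity at $s=1$ is exactly $b'$. This gives the required factorisation of $P$ and hence of $F'$. The main obstacle is the bookkeeping across the four subcases: when $a=0$ the Rolle gap $(0,c_1^{1/n})$ disappears and the $t^{n-1}$ prefactor must absorb precisely the missing multiplicity, while when $b=0$ one must actively rule out $s=1$ as a zero of $P$, which is exactly where the positivity of $H(1)$ and $H'(1)$ is used.
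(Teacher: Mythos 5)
Your proposal is correct and follows essentially the same route as the paper: Rolle's theorem applied to the real roots of $F$ in $[0,1]$ together with a degree count that forces every weak inequality to become an equality. The only minor differences are that you pin down $a'$ and $b'$ by direct evaluation of $P$ at $s=0$ and $s=1$ (using that the relevant terms have the same sign because $c_i\in(0,1)$, so no cancellation occurs when $b=0$), whereas the paper reads these values off from the equality case of the degree inequality and uses Property~\ref{P1} rather than your explicit substitution $s=t^n$ to organise the nonzero roots into $n$-th root orbits.
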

\begin{proof}
Let $a'$ be the multiplicity of $0$ as a root of $F'(t)$, 
let $b'$ be the multiplicity of $1$ as a root of $F'(t)$
and let $d'$ be the number of distinct real roots of $F'(t)$ in the interval~$(0,1)$. 
We will check that 
\begin{equation}\label{E2}
(a'-a)+[(b'-b)+(d'-d)]n \geq -1 
\end{equation}

\noindent which is equivalent with
\begin{equation}\label{E1}
a'+(b'+d')n \geq \deg F'(t) .
\end{equation}

\medskip
\noindent Consider several cases 
depending on the values of $a$ and $b$.\\

\noindent (I) $a,b>0$.
The polynomial $F(t)$ has $d+2$ distinct real roots in the closed interval $[0,1]$. 
These roots divide $[0,1]$ to $d+1$ sub-intervals. 
By Rolle's Theorem inside each sub-interval there is at least one root of $F'(t)$. 
Hence $d'\geq d+1$. 
The differentiation decreases the multiplicity of a root of a polynomial by $1$. 
Thus  $a'=a-1$ and $b'=b-1$.  \\
(II) $a>0$ and $b=0$. 
By similar arguments as before we get $a'=a-1$, $b'\geq 0$, and $d'\geq d$.\\
(III) $a=0$ and $b>0$. 
In this case $F(t)$ is a polynomial of $t^n$. Taking the derivative we get $a'\geq n-1$. 
Moreover $b'=b-1$ and $d'\geq d$.\\
(IV) $a=b=0$. 
We get $a'\geq n-1$, $b'\geq 0$ and $d'\geq d-1$.\\

\noindent One easily verifies that inequality~(\ref{E2}) holds in each case. 

\medskip

\noindent Let $t_1$, \dots, $t_{d'}$ be the pairwise distinct real roots of $F'(t)$ from the interval $(0,1)$. 
Consider the polynomial 
$P(t)=t^{a'}(t^n-1)^{b'}\prod_{i=1}^{d'}(t^n-t_i^n)$. 
Inequality~(\ref{E1}) reads $\deg P(t)\geq \deg F'(t)$. 
By Property~\ref{P1}  $P(t)$ divides $F'(t)$.  
Hence $P(t)$ and $F'(t)$ are equal up to a multiplication by a constant. 
This proves the first statement of the lemma. 

\medskip

\noindent By the form of $F'(t)$ we get $\deg F'(t)=a'+(b'+d')n$. 
Consequently  the inequality ``$\geq$'' in ~(\ref{E2}) can be replaced by ``$=$''.
This implies that all weak inequalities obtained in the above case-by-case analysis must be equalities and proves the second statement of the lemma. 
\end{proof}

\begin{Lemma}
\label{AL}
Let $F(z)=(z^n-c)^{e}$ be a complex polynomial.  
Then for $1\leq k<\deg F(z)$ one has 
$\frac{d^k}{dz^k}F(z)=C z^{a}(z^n-c)^{b}\prod_{i=1}^{d}(z^n-c_i)$, where $C\in \bC$ and
\begin{enumerate}
\item[(1)] $0\leq a <n$ and $a+k\equiv 0 \pmod n$,
\item[(2)] $b=\max\{e-k,0\}$,
\item[(3)] $d=\min\{e,k\}-\lceil \frac{k}{n} \rceil$,  where $\lceil x \rceil$ denotes the smallest integer number larger than or equal to $x$,
\item[(4)] $a +(b+d)n=ne-k$,
\item [(5)] $c_i\neq c_j$ for $1\leq i<j\leq d$ and $0\neq c_i\neq c$ for $1\leq i\leq d$.
\end{enumerate}
\end{Lemma}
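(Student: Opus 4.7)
The idea is to reduce to the real polynomial $(t^n-1)^e$ via a linear change of variable and then apply Lemma~\ref{L1} iteratively $k$ times, tracking the exponents $a$, $b$, $d$.

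First I would fix any $n$th root $c^{1/n}$ of $c$ and substitute $z=c^{1/n}t$. Then $(z^n-c)^e=c^e(t^n-1)^e$ and $\frac{d^k}{dz^k}=c^{-k/n}\frac{d^k}{dt^k}$, so up to a nonzero constant $\frac{d^k}{dz^k}(z^n-c)^e$ equals $\frac{d^k}{dt^k}(t^n-1)^e$. The starting polynomial $(t^n-1)^e$ already fits the form of Lemma~\ref{L1} with $a_0=0$, $b_0=e$, $d_0=0$, and trivially satisfies the analogues of (1)--(5) at level $j=0$.

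Next I would prove by induction on $j$, for $0\le j\le k$, that $\frac{d^j}{dt^j}(t^n-1)^e$ has the form $C_j\,t^{a_j}(t^n-1)^{b_j}\prod_{i=1}^{d_j}(t^n-c_i^{(j)})$ with the $c_i^{(j)}$ pairwise distinct real numbers in $(0,1)$, with $a_j\in[0,n)$ satisfying $a_j+j\equiv 0\pmod n$, with $b_j=\max\{e-j,0\}$, and with $a_j+(b_j+d_j)n=ne-j$. The inductive step is exactly Lemma~\ref{L1}: the update rules $a_{j+1}=a_j-1$ or $n-1$ and $b_{j+1}=b_j-1$ or $0$ preserve (1) and (2) at level $j+1$, and (4) at level $j+1$ is forced by the fact that one differentiation drops the degree by one, combined with the explicit form of the derivative given by Lemma~\ref{L1}.

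Setting $j=k$ yields (1), (2) and (4) directly. To derive (3), write $k=qn+r$ with $0\le r<n$; then $a_k=n-r$ when $r>0$ and $a_k=0$ when $r=0$, so in both cases $(k+a_k)/n=\lceil k/n\rceil$. Solving (4) for $d_k$ yields $d_k=k-\lceil k/n\rceil$ when $k\le e$ and $d_k=e-\lceil k/n\rceil$ when $k>e$, and these combine into $d_k=\min\{e,k\}-\lceil k/n\rceil$. Finally I would undo the substitution $z=c^{1/n}t$: the factor $t^{a_k}$ becomes a constant multiple of $z^{a_k}$, $(t^n-1)^{b_k}$ becomes a constant multiple of $(z^n-c)^{b_k}$, and each $(t^n-c_i^{(k)})$ becomes a constant multiple of $(z^n-c\,c_i^{(k)})$. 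Setting $c_i=c\,c_i^{(k)}$ gives (5), since the $c_i^{(k)}$ are pairwise distinct and lie in $(0,1)$, forcing $c_i\ne 0$ and $c_i\ne c$. The only real work is the bookkeeping that carries the three invariants (1), (2), (4) correctly through the induction; items (3) and (5) then fall out automatically.
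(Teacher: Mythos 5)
Your proposal is correct and follows essentially the same route as the paper's proof: reduce to the polynomial $(z^n-1)^e$ by the substitution $z\mapsto \sqrt[n]{c}\,z$ (implicitly assuming $c\neq 0$, as the paper also does), iterate Lemma~\ref{L1} to control $a_j$, $b_j$, $d_j$, and then recover (3) from (1), (2) and the degree identity (4). The only cosmetic difference is that you carry the congruence in (1) through the induction via the update rules, while the paper deduces it afterwards from (4); the two are equivalent.
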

 \begin{proof}
Without loss of generality we may assume that $c=1$. The general case
reduces to this case by replacing $F(z)$ by  the polynomial 
$c^{-e}F(\sqrt[n]{c}z)$. 

\medskip

 \noindent The polynomial $F(z)=(z^n-1)^e$ satisfies the assumptions of  Lemma \ref{L1}. 
Applying this lemma  to subsequent derivatives of $F(z)$ we see
that the  $k$th derivative of $F(z)$ has the form 
$\frac{d^k}{dz^k}F(z)=C_k z^{a_k}(z^n-1)^{b_k}\prod_{i=1}^{d_k}(z^n-c_{i,k})$
and verifies the assumptions of Lemma~\ref{L1}, for $0\leq k<\deg F(z)$. 
This implies (4) and~(5). By Lemma~\ref{L1} we get $0\leq a_k \leq n-1$. 
The congruence $a_k+k\equiv 0 \pmod n$ is a consequence of (4), and we conclude (1). 
Remark that $b_0=e$ and by Lemma~\ref{L1} we have $b_{i+1}=b_i-1$ if $b_i>0$ 
and  $b_{i+1}=0$ if $b_i=0$. This gives~(2). Now we will prove (3).  By (2) we get $e-b_k=\min \{e,k\}$ and by (1) the quotient $\frac{a_k+k}{n}$ is the smallest integer number larger than or equal to $\frac{k}{n}$. Computing $d_k$ from (4) we get $d_k=e-b_k-\frac{a_k+k}{n}=\min\{e,k\}-\lceil \frac{k}{n} \rceil$.
\end{proof}

\begin{Corollary}
\label{CC}
Let $F(z)=(z^n-c)^{e}$ be a complex polynomial. 
Then every nonzero derivative $\frac{d^k}{dz^k}F(z)$ has no multiple complex roots 
except $0$ and the roots of $z^n=c$. 
\end{Corollary}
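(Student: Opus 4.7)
The plan is to deduce Corollary~\ref{CC} directly from the factorization of $\frac{d^k}{dz^k}F(z)$ provided by Lemma~\ref{AL}. By that lemma, every nonzero $k$th derivative can be written as
\[
\frac{d^k}{dz^k}F(z) = C\, z^{a}(z^n-c)^{b}\prod_{i=1}^{d}(z^n-c_i),
\]
so I would simply classify which roots of this product can have multiplicity greater than one.

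First, observe that within each individual factor $z^n-c_i$ the roots are all simple, because $c_i\neq 0$ implies $z^n-c_i$ has $n$ distinct roots (the $n$th roots of $c_i$). Similarly, the roots of $z^n-c$ (those in the factor $(z^n-c)^b$) are distinct among themselves, and the factor $z^a$ contributes only the root $0$.

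Next, I would check that roots coming from different factors cannot coincide: by Lemma~\ref{AL}(5) we have $c_i\neq c_j$ for $i\neq j$, so the factors $z^n-c_i$ and $z^n-c_j$ share no common root; since $c_i\neq 0$, no root of $z^n-c_i$ equals $0$; and since $c_i\neq c$, no root of $z^n-c_i$ lies in the set $\{z:z^n=c\}$. Therefore the only way for a root of $\frac{d^k}{dz^k}F(z)$ to be multiple is for it to come from a factor appearing with exponent at least two, namely $z^a$ or $(z^n-c)^b$. This gives exactly the exceptional set $\{0\}\cup\{z:z^n=c\}$ claimed in the statement.

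There is essentially no obstacle here; the entire content is packaged in Lemma~\ref{AL}(5), and the corollary is a straightforward bookkeeping of which factors can produce repeated roots. The proof should therefore be short, amounting to a one-paragraph verification.
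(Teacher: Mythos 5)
Your proposal is correct and is exactly how the paper intends the corollary to be read: it is stated right after Lemma~\ref{AL} with no separate proof, because it follows immediately from the factorization there together with item (5), just as you argue. (The only edge case outside Lemma~\ref{AL}'s range, $k=\deg F$, is trivial since the derivative is then a nonzero constant.)
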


\section{Higher order polars of a branch}
 
 \noindent Let $i_k$ be the nonnegative  number such that $e_{i_k}\leq k < e_{i_k-1}$. Remember that  the $k$th
 partial derivative of $f(x,y)$ admits a decomposition $\frac{\partial^k} {\partial y^k}f(x,y)=\hbox{\rm unit }\prod_{j=1}^{n-k}(y-\gamma_j)$, where $\gamma_j$ are Puiseux series of positive order.

\medskip

\noindent Hereafter the notation $\left[a \;\;\hbox{\rm mod } n\right]$ for an integer $a$ and a natural number $n$ means the remainder of the division of $a$ by $n$.

\begin{Lemma}
\label{TL}
Let $f(x,y)\in \bC[[x,y]]$ be an irreducible power series  of characteristic $(b_0,b_1,\ldots,b_h)$.
\begin{enumerate}
\item For every $\gamma_j\in \Zer \frac{\partial^k} {\partial y^k}f$ we have $\cont(f,\gamma_j)\in \left \{\frac{b_1}{b_0},\ldots, \frac{b_{i_k}}{b_0}\right\}$.
\item If $i< i_k$ then the number of $\gamma_j$'s with $\cont(f,\gamma_j)=\frac{b_i}{b_0}$ equals $\left(\frac{b_0}{e_i}-\frac{b_0}{e_{i-1}}\right)k$.
\item  If $i= i_k$ then  the number of $\gamma_j$'s with $\cont(f,\gamma_j)=\frac{b_i}{b_0}$ equals  ${b_0}-\frac{b_0}{e_{i-1}}k$.
\item If $i\leq i_k$ then the number of $\gamma_j$'s with $\cont(f,\gamma_j)=\frac{b_i}{b_0}$ and such that ${b_i/b_0}$ is not in the support of $\gamma_j$ equals $\frac{b_0}{e_{i-1}}\left[-k \;\;\hbox{\rm mod } n_i\right]$, where $n_i=\frac{e_{i-1}}{e_i}$.
\item If $i\leq i_k$ then the number of $\gamma_j$'s with $\cont(f,\gamma_j)=\frac{b_i}{b_0}$ and such that ${b_i/b_0}$ is in the support of  $\gamma_j$  equals $\frac{b_0}{e_{i}}\left(\min\{e_i,k\}-\lceil \frac{k}{n_i}\rceil\right)$.
\end{enumerate}

\end{Lemma}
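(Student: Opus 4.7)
The plan is to apply Lemma~\ref{GL} together with Lemma~\ref{AL} to each pseudo-ball $B \in T_i(f)$ with $i \leq i_k$. By Property~\ref{PR2} one has $F_B(z) = (z^{n_i} - c_B)^{e_i}$ with $c_B \neq 0$ and $\deg F_B = e_{i-1}$; the condition $i \leq i_k$ is equivalent to $k < e_{i-1}$, so Lemma~\ref{GL} yields $\frac{d^k}{dz^k} F_B(z) = \hbox{\rm const} \cdot \prod_{\gamma_j \in B}(z - \lc_B \gamma_j)$. Comparing with the factorization provided by Lemma~\ref{AL} identifies three groups of roots, counted with multiplicity: $z = 0$ with multiplicity $a = [-k \;\hbox{\rm mod } n_i]$; each of the $n_i$ roots of $z^{n_i} - c_B$ with multiplicity $b = \max\{e_i - k, 0\}$; and the $n_i d$ remaining roots, where $d = \min\{e_i, k\} - \lceil k/n_i\rceil$, simple and distinct from $0$ and from the roots of $z^{n_i} - c_B$.

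The second step translates these algebraic data into contact information with $f$. A root $\gamma_j \in B$ satisfies $\lc_B \gamma_j = 0$ precisely when $b_i/b_0$ is not in the support of $\gamma_j$. On the other hand, if $\gamma_j$ belongs to a sub-ball $B' \in T_{i+1}(f) \subset B$, then necessarily $\lc_B \gamma_j = \lc_B \alpha$ for any $\alpha \in B'$, which is a root of $z^{n_i} - c_B$. For $i < i_k$ the converse is obtained by counting: Lemma~\ref{GL} applied to each of the $n_i$ sub-balls (valid because $k < e_i$) gives $e_i - k = b$ roots $\gamma_j$ per sub-ball, for a total of $n_i b$, matching exactly the count of $\gamma_j \in B$ with $\lc_B \gamma_j$ a root of $z^{n_i} - c_B$; hence these two sets coincide. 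For $i = i_k$ one has $b = 0$, so no $\gamma_j \in B$ has $\lc_B \gamma_j$ a root of $z^{n_i} - c_B$, which in turn rules out any $\gamma_j \in B$ with contact strictly larger than $b_{i_k}/b_0$, since such a $\gamma_j$ would force $\lc_B \gamma_j$ to agree with $\lc_B \alpha$ for some $\alpha \in \Zer f \cap B$, i.e., to be a root of $z^{n_i} - c_B$.

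Once the correspondence is in place, the counts assemble directly. Per $B \in T_i(f)$ there are $a$ roots $\gamma_j$ with $\cont(f, \gamma_j) = b_i/b_0$ and $b_i/b_0$ not in the support, and $n_i d$ roots with $\cont(f, \gamma_j) = b_i/b_0$ and $b_i/b_0$ in the support; a short verification from $n_i \lceil k/n_i \rceil = k + a$ gives $a + n_i d = (n_i - 1)k$ for $i < i_k$ and $a + n_i d = e_{i-1} - k$ for $i = i_k$. Multiplying by $|T_i(f)| = b_0/e_{i-1}$ and using $n_i = e_{i-1}/e_i$ yields (2), (3), (4), and~(5). For~(1), the lower bound $\cont(f, \gamma_j) \geq b_1/b_0$ follows by applying Lemma~\ref{GL} to the unique pseudo-ball of height $b_1/b_0$ containing all $b_0$ roots of $f$ (its $F_B$ has degree $n$, so all $n - k$ roots $\gamma_j$ belong to it), while the upper bound $\cont(f, \gamma_j) \leq b_{i_k}/b_0$ was obtained above. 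The main obstacle is the reverse direction of the correspondence between $\gamma_j$ lying in a sub-ball and $\lc_B \gamma_j$ being a root of $z^{n_i} - c_B$: this is invisible at the level of Lemma~\ref{GL} alone and has to be extracted by matching counts from Lemma~\ref{GL} at two consecutive levels, which is precisely the place where the condition $i \leq i_k$ plays a decisive role.
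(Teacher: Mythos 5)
Your treatment of parts (2)--(5) follows essentially the paper's route: apply Lemma~\ref{GL} together with Lemma~\ref{AL} on each $B\in T_i(f)$, sort the roots $\gamma_j\in B$ according to whether $\lc_B\gamma_j$ is $0$, a root of $z^{n_i}-c_B$, or one of the $n_id$ simple roots, and multiply the per-ball counts by $|T_i(f)|=b_0/e_{i-1}$; the arithmetic $n_i\lceil k/n_i\rceil=k+a$ and the resulting formulas are correct. One remark, though: the two-level counting detour that you single out as ``the main obstacle'' is not actually needed. Since $F_B(z)=\prod_{\alpha\in\Zer f\cap B}(z-\lc_B\alpha)=(z^{n_i}-c_B)^{e_i}$, every root of $z^{n_i}-c_B$ equals $\lc_B\alpha$ for some $\alpha\in\Zer f\cap B$; hence $\lc_B\gamma_j$ is a root of $z^{n_i}-c_B$ if and only if $\cont(f,\gamma_j)>b_i/b_0$, immediately in both directions. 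That equivalence is all the count requires; whether such a $\gamma_j$ moreover lies inside a ball of $T_{i+1}(f)$ is a strictly stronger fact that the lemma never uses (and the paper's proof never invokes). Your matching-of-counts argument for it is valid, just superfluous.

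There is, however, a genuine gap in your part (1). You reduce it to the sandwich $b_1/b_0\leq\cont(f,\gamma_j)\leq b_{i_k}/b_0$, but the statement asserts more: the contact must \emph{equal} one of the characteristic exponents $b_1/b_0,\ldots,b_{i_k}/b_0$, and a priori it could lie strictly between two consecutive ones; the sandwich does not exclude this. The paper closes the point by counting: the quantities in (2) and (3) telescope to $b_0-k$, which is the total number of Newton--Puiseux roots of $\frac{\partial^k}{\partial y^k}f$, so every $\gamma_j$ is already accounted for by a characteristic contact. Alternatively, your own sub-ball correspondence would also do it: if $\cont(f,\gamma_j)>b_i/b_0$ with $i<i_k$, then $\gamma_j$ lies in some $B\in T_i(f)$ and $\lc_B\gamma_j$ is a root of $z^{n_i}-c_B$, so by your correspondence $\gamma_j$ lies in a ball of $T_{i+1}(f)$ and hence $\cont(f,\gamma_j)\geq b_{i+1}/b_0$; iterating from $i=1$ rules out intermediate values, and the $i=i_k$ case stops the climb. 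Either repair should be stated explicitly; as written, part (1) is not proved.
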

\begin{proof} Let $i\leq i_k$. Take  a pseudo-ball $B\in T_i(f) $. After Property \ref{PR2} the polynomial $F_B(z)$ has the form $(z^{n_i}-c)^{e_i}$ for some $c\neq 0$, so its degree is 
$e_{i-1}$ and after the choice of $i$ we get $k< e_{i_k-1}\leq e_{i-1}=\deg F_B(z)$. By Lemma \ref{AL} we get
\begin{equation}
\label{ad}
\frac{d^k}{dz^k}F_B(z)=C z^{a}(z^{n_i}-c)^{b}\prod_{j=1}^{d}(z^{n_i}-c_j). 
\end{equation}

\noindent By Lemma \ref{GL} the number of $\gamma_j$'s in $B$ such that $\cont(f,\gamma_j)=\frac{b_i}{b_0}$ equals 
$i_B:=a+dn_i$.  After Lemma \ref{AL}  we have 
\[
 i_B=\left\{
\begin{array}{rl}
(n_i-1)k & \hbox{\rm if } i< i_k\\
e_{i_k-1}-k & \hbox{\rm if } i=i_k.\\
\end{array}
\right.
\]

\noindent For every $\gamma_j$ satisfying $\cont(f,\gamma_j)=\frac{b_i}{b_0}$ there is a unique pseudo-ball $B\in T_i(f)$ containing $\gamma_j$. By Property \ref{PR2} the total number of $\gamma_j$'s with $\cont(f,\gamma_j)=\frac{b_i}{b_0}$ equals $i_B\frac{b_0}{e_{i-1}}$. This gives the second and third statements. As a consequence the number of $\gamma_j$'s with $\cont(f,\gamma_j)\in \left \{\frac{b_1}{b_0},\ldots, \frac{b_{i_k}}{b_0}\right\}$
equals
\[
\sum_{i=1}^{i_k-1} \left(\frac{b_0}{e_i}-\frac{b_0}{e_{i-1}}\right)k+{b_0}-\frac{b_0}{e_{i_k-1}}k=b_0-k,
\]

\noindent which is the total number of Newton-Puiseux roots of  $\frac{\partial^k} {\partial y^k}f(x,y)=0$. This proves the first statement.

\medskip

\noindent Given $B\in T_i(f)$, consider all $\gamma_j\in B$ such that $\cont(f,\gamma_j)=b_i/b_0$. By Lemma$\,$\ref{GL} the number of such $\gamma_j$'s with $\lc_B\gamma_j=0$ equals $a$ while the number of such $\gamma_j$'s with $\lc_B\gamma_j\neq 0$ equals $n_id$, where $a$ and $d$ are from (\ref{ad}).

\medskip

\noindent Recall that there are $\frac{b_0}{e_{i-1}}$ pseudo-balls in $T_i(f)$. We finish the proof of the last two statements computing the values $\frac{b_0}{e_{i-1}}a$ and $\frac{b_0}{e_{i-1}}n_id=\frac{b_0}{e_{i}}d$ using the first and the third items of Lemma \ref{AL}.
\end{proof}

\medskip

 \noindent The next theorem is an improvement of  \cite[Theorem 3.1]{Casas}.

\begin{Theorem}
\label{main}
Let $f(x,y)\in \bC[[x,y]]$ be an irreducible power series  of cha\-rac\-teristic $(b_0,b_1,\ldots,b_h)$. Put $e_s=\gcd(b_0,\ldots,b_s)$.
Fix $1\leq k<\ord f(0,y)$, and let $i_k$ be the nonnegative integer number such that $e_{i_k}\leq k < e_{i_{k-1}}$. Then $\frac{\partial^k} {\partial y^k}f(x,y)$ admits a factorization as following:
\[\frac{\partial^k} {\partial y^k}f(x,y)=\Gamma^{(1)}\cdots \Gamma^{(i_k)},\]

\noindent where $\Gamma^{(i)}$ are power series, not necessarily irreducible, verifying: 
\begin{enumerate}
\item  For each $1\leq i\leq i_k$, all branches of $\Gamma^{(i)}$ have contact $b_i/b_0$ with $f(x,y)=0$. The order of   $\Gamma^{(i)}(0,y)$  equals $\left(\frac{b_0}{e_i}-\frac{b_0}{e_{i-1}}\right)k$, for $i< i_k$ and ${b_0}-\frac{b_0}{e_{i-1}}k$ for $i=i_k$.
\item $\Gamma^{(i)}$ can be written as a product $\Gamma^{(i)}_1\Gamma^{(i)}_2$, where for any irreducible factor $g$ of $\Gamma^{(i)}_1$ the first $i-1$ characteristic exponents of $f$ and $g$ are the same  
and 
$\frac{b_i}{b_0}\not\in \ch \, g$; and $\left \{\frac{b_1}{b_0},\ldots,\frac{b_{i}}{b_0}\right\}$ is the set of characteristic exponents of any irreducible factor of $\Gamma_2^{(i)}$.
\item The order of $\Gamma_1^{(i)}(0,y)$  equals $\frac{b_0}{e_{i-1}}\left[-k \;\;\hbox{\rm mod } n_i\right]$, where $n_i=\frac{e_{i-1}}{e_i}$.
\item The order of $\Gamma_2^{(i)}(0,y)$  equals $ \frac{b_0}{e_{i}}\left(\min\{e_i,k\}-\lceil \frac{k}{n_i}\rceil\right)$.
\item The power series $\Gamma_2^{(i)}$ has $\min\{e_i,k\}-\lceil \frac{k}{n_i}\rceil$ irreducible factors.  
\end{enumerate}
\end{Theorem}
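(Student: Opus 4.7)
The plan is to read off the decomposition directly from Lemma \ref{TL}, which already partitions the Newton-Puiseux roots $\gamma_j$ of $\frac{\partial^k}{\partial y^k}f$ into disjoint classes indexed by $i$ and by whether $b_i/b_0$ lies in the support of $\gamma_j$. I would set
\[
\Gamma^{(i)}:=u_i(x,y)\!\!\prod_{\cont(f,\gamma_j)=b_i/b_0}\!\!(y-\gamma_j),\qquad
\Gamma_2^{(i)}:=v_i(x,y)\!\!\prod_{\cont(f,\gamma_j)=b_i/b_0,\ b_i/b_0\in\mathrm{supp}\,\gamma_j}\!\!(y-\gamma_j),
\]
and let $\Gamma_1^{(i)}$ be the complementary factor of $\Gamma^{(i)}$, with the units chosen so that $\prod_i\Gamma^{(i)}$ reconstitutes $\frac{\partial^k}{\partial y^k}f$. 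Two well-definedness checks are in order, namely that $\Gamma^{(i)}$ and each $\Gamma_s^{(i)}$ are products of complete irreducible factors. For $\Gamma^{(i)}$ this is Property~\ref{contact-roots}. For the finer split, note that the star action $\epsilon*_m$ multiplies the coefficient of $x^{b_i/b_0}$ in $\gamma$ by the nonzero scalar $\epsilon^{b_im/b_0}$, so ``$b_i/b_0\in\mathrm{supp}\,\gamma$'' is preserved along the orbit; since an orbit is precisely the set of Newton-Puiseux roots of a given irreducible factor, the refinement respects irreducibility. Items (1), (3) and (4) then follow from Lemma~\ref{TL}(2)--(5) combined with the observation that $\ord\Gamma_s^{(i)}(0,y)$ equals the number of $\gamma_j$ contributing to $\Gamma_s^{(i)}$.

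For the characteristic-exponent statements in item (2), fix an irreducible factor $g$ of $\Gamma^{(i)}$, so that $\cont(f,g)=b_i/b_0$. Applying Property~\ref{contact-exponents} to each $q\in\{b_1/b_0,\ldots,b_{i-1}/b_0\}$ shows that $\{b_1/b_0,\ldots,b_{i-1}/b_0\}\subseteq\ch g$ and furnishes the first $i-1$ characteristic exponents. If $g\mid\Gamma_1^{(i)}$, then $b_i/b_0$ lies outside the support of every root of $g$, so it cannot belong to $\ch g$; this settles the $\Gamma_1$ half of (2). If $g\mid\Gamma_2^{(i)}$, Property~\ref{contact-nonzero} gives $b_i/b_0\in\ch g$ and hence $\{b_1/b_0,\ldots,b_i/b_0\}\subseteq\ch g$.

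The hard part is the reverse inclusion $\ch g\subseteq\{b_1/b_0,\ldots,b_i/b_0\}$ for $g\mid\Gamma_2^{(i)}$. My strategy is to bound every pairwise contact $O(\gamma_j,\gamma_l)$ between distinct roots of $\Gamma_2^{(i)}$ by $b_i/b_0$, since each element of $\ch g$ is realised as such a contact. Fix $B\in T_i(f)$; Property~\ref{PR2} gives $F_B(z)=(z^{n_i}-c)^{e_i}$, so Lemma~\ref{AL} combined with Corollary~\ref{CC} ensures that every nonzero value $z_0$ with $z_0^{n_i}\neq c$ appearing as a root of $\frac{d^k}{dz^k}F_B(z)$ is \emph{simple}. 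Via the earlier lemma that identifies $\frac{d^k}{dz^k}F_B(z)$ with $\mathrm{const}\cdot\prod_{\gamma_j\in B}(z-\lc_B\gamma_j)$, this simplicity forces distinct roots of $\Gamma_2^{(i)}$ in the same $B$ to have pairwise distinct leading coefficients, hence pairwise contact exactly $b_i/b_0$; contacts between roots of $\Gamma_2^{(i)}$ in different pseudo-balls of $T_i(f)$ are automatically strictly less than $b_i/b_0$. Item (5) is then a counting step: any irreducible branch whose characteristic exponent set equals $\{b_1/b_0,\ldots,b_i/b_0\}$ has degree $b_0/e_i$ (the minimal $t$ making every $b_l t/b_0$ integer with common divisor one), so dividing the order $\frac{b_0}{e_i}\bigl(\min\{e_i,k\}-\lceil k/n_i\rceil\bigr)$ from (4) by $b_0/e_i$ yields the claimed number of factors. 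The principal obstacle is precisely this last inclusion: translating the algebraic simplicity statement of Corollary~\ref{CC} into the geometric statement about pairwise contacts of Newton-Puiseux roots inside each pseudo-ball.
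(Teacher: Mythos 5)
Your proposal is correct and follows essentially the same route as the paper: the same contact/support-based splitting $\Gamma^{(i)}=\Gamma_1^{(i)}\Gamma_2^{(i)}$, the orders read off from Lemma~\ref{TL}, the characteristic exponents obtained from Properties~\ref{contact-roots}, \ref{contact-exponents} and \ref{contact-nonzero}, the exclusion of exponents above $b_i/b_0$ via Lemma~\ref{GL} together with Lemma~\ref{AL} and Corollary~\ref{CC}, and the same division count for item (5). The only micro-step left implicit (glossed equally in the paper) is that the leading coefficient of a root of $\Gamma_2^{(i)}$ cannot satisfy $z^{n_i}=c$, since it would then coincide with the leading coefficient of a root of $f$ in $B$ and force contact with $f$ strictly greater than $b_i/b_0$.
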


\noindent \begin{proof}

\noindent  We factorize  $\prod_{j=1}^{n-k} (y-\gamma_j)$  into $\bar{\Gamma}^{(1)}\cdots\bar{\Gamma}^{(i_k)}$, where  every $\bar\Gamma^{(i)}$ is the product $\prod (y-\gamma_j)$ running over $\gamma_j\in \Zer \frac{\partial^k} {\partial y^k}f$ with $\cont(f,\gamma_j)=\frac{b_i}{b_0}$. By  Property \ref{contact-roots} if $g$ is an irreducible  factor of $\frac{\partial^k} {\partial y^k}f$ and $\gamma,\gamma' \in \Zer g$ then $\cont(f,\gamma)=\cont(f,\gamma')$. Hence 
$\bar\Gamma^{(i)}$ is the product of irreducible power series, so it is a power series. By$\,$(\ref{ppp2}) we have
\[\frac{\partial^k} {\partial y^k}f(x,y)=\Gamma^{(1)}\cdots \Gamma^{(i_k)},\]
\noindent where $\Gamma^{(i)}$ equals $\bar\Gamma^{(i)}$,  up to multiplication by a unit, for $1\leq i\leq i_k$.

 \medskip
 
 \noindent The first statement of the theorem is a consequence of the first three statements of Lemma \ref{TL}, since the order of $\Gamma^{(i)}(0,y)$  is the number of  $\gamma_j$'s with $\cont(f,\gamma_j)=\frac{b_i}{b_0}$.
 
 \medskip

 \noindent By Property \ref{contact-exponents}  
  for any irreducible factor $g$ of $\Gamma^{(i)}$ the first $i-1$ characteristic exponents of $f$ and $g$ are the same. We define $\Gamma^{(i)}_1$  (respectively $\Gamma^{(i)}_2$) as the product of all irreducible factors $g$  of $\Gamma^{(i)}$ such that $\frac{b_i}{b_0}$ is not in the support of the Newton-Puiseux roots of $g$ (respectively $\frac{b_i}{b_0}$ is in the support of  the Newton-Puiseux roots of $g$). We will prove that  $\left \{\frac{b_1}{b_0},\ldots,\frac{b_{i}}{b_0}\right\}$ is the set of characteristic exponents of any irreducible factor  of $\Gamma_2^{(i)}$. By Property \ref{contact-nonzero}, the first $i$ charac\-te\-ris\-tic exponents of $g$ are $\frac{b_1}{b_0},\ldots, \frac{b_i}{b_0}$.
  Suppose that there exists a rational number $r> \frac{b_i}{b_0}$ which is also a characteristic exponent of $g$. Let $\gamma$ be a Newton-Puiseux root of $g$. By  (\ref{contact}) there is $\gamma'\in Zer g$, $\gamma'\neq \gamma$ such that $O(\gamma,\gamma')=r > \frac{b_i}{b_0}$. Consider the pseudo-ball $B\in T_i(f)$ containing $\gamma$. We get $\lc_B \gamma=\lc_B\gamma'$. Hence $\lc_B \gamma$ is a multiple root of $\frac{d^k}{dz^k}F_B(z)$, which  contradicts Corollary \ref{CC}.
 
  \medskip
  
 \noindent The third and fourth statements of the theorem are a consequence of the fourth and fifth  items of Lemma \ref{TL}.
 
  \medskip
  
 \noindent For any irreducible factor $g$ of $\Gamma_2^{(i)}$ the order of
 $g(0,y)$   is the least common denominator of the elements of 
 $\ch \, g=\left \{\frac{b_1}{b_0},\ldots,\frac{b_{i}}{b_0}\right\}$, that is $\frac{b_0}{e_i}$.
 The number of irreducible factors of the  power series $\Gamma_2^{(i)}$ is the quotient of the
 order of   $\Gamma_2^{(i)}(0,y)$ by $\frac{b_0}{e_i}$, which finishes the proof.  
\end{proof}

\bigskip

\noindent The first part of Theorem \ref{main} is \cite[Theorem 3.1]{Casas}. For $k=1$ the power series $\Gamma_2^{(i)}$ is a unit and consequently $\Gamma^{(i)}=\Gamma_1^{(i)}$ for every factor of Casas-Alvero's decomposition.

\medskip

\begin{Corollary}
\label{coro1}
With the notations and assumptions of Theorem~\ref{main}:
\begin{enumerate}
\item If $k=e_{i-1}-1$ then $\Gamma^{(i)}$ is irreducible and 
$\ch \, \Gamma^{(i)}=\left \{\frac{b_1}{b_0},\ldots,\frac{b_{i-1}}{b_0}\right\}$.

\item If $k=e_{i-1}-n_i$ then $\Gamma^{(i)}$ is irreducible and 
$\ch \, \Gamma^{(i)}=\left \{\frac{b_1}{b_0},\ldots,\frac{b_{i}}{b_0}\right\}$.
\end{enumerate}
\end{Corollary}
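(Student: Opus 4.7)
The plan is to derive both statements by plugging the prescribed value of $k$ directly into Theorem~\ref{main} and simplifying. First I would verify that in each case the index $i$ of the corollary coincides with $i_k$, so that $\Gamma^{(i)}$ is indeed the last factor of the decomposition. In case~(1) this is immediate from $e_i \leq e_{i-1}-1$. In case~(2), $k = n_i(e_i-1)$; the hypothesis $k\geq 1$ together with $n_i\geq 2$ forces $e_i\geq 2$, and then $n_i(e_i-1)\geq e_i$, giving $e_i \leq k < e_{i-1}$.

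Next I would substitute into the formulas of Theorem~\ref{main}(3)--(5). In case~(1), from $k = n_ie_i - 1$ one reads off $[-k \bmod n_i] = 1$, $\min\{e_i,k\} = e_i$, and $\lceil k/n_i\rceil = e_i$, whence $\ord \Gamma_1^{(i)}(0,y) = b_0/e_{i-1}$ and $\ord \Gamma_2^{(i)}(0,y) = 0$, so $\Gamma_2^{(i)}$ is a unit and $\Gamma^{(i)} = \Gamma_1^{(i)}$. In case~(2), from $k = n_i(e_i-1)$ one gets $[-k \bmod n_i] = 0$ and $\lceil k/n_i\rceil = e_i - 1$, so $\Gamma_1^{(i)}$ is a unit, while $\Gamma_2^{(i)}$ has $y$-order $b_0/e_i$ and exactly one irreducible factor.

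Case~(2) is then immediate from Theorem~\ref{main}(2): $\Gamma^{(i)} = \Gamma_2^{(i)}$ is the unique irreducible factor and its characteristic set is $\{b_1/b_0,\ldots,b_i/b_0\}$. For case~(1), I would argue as follows. Every irreducible factor of $\Gamma_1^{(i)}$ has the first $i-1$ characteristic exponents of $f$, so its $y$-order is at least the least common multiple of the denominators of $b_1/b_0,\ldots,b_{i-1}/b_0$, which equals $b_0/e_{i-1}$. Since $\ord \Gamma^{(i)}(0,y) = b_0/e_{i-1}$, there is a single irreducible factor $g$, of $y$-order exactly $b_0/e_{i-1}$. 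Because $\gcd(b_0/e_{i-1}, b_1/e_{i-1}, \ldots, b_{i-1}/e_{i-1}) = e_{i-1}/e_{i-1} = 1$, the integer sequence $(b_0/e_{i-1}, b_1/e_{i-1}, \ldots, b_{i-1}/e_{i-1})$ is already a complete characteristic sequence, so $g$ admits no further characteristic exponents and $\ch g = \{b_1/b_0,\ldots,b_{i-1}/b_0\}$.

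The arithmetic is routine; the only conceptual point is the last step of case~(1), where one must exclude hidden characteristic exponents of $g$ beyond those forced by its contact with $f$. This is settled by matching the $y$-multiplicity of $g$ with the least common multiple of the denominators of the first $i-1$ characteristic exponents of $f$.
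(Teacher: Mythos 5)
Your proof is correct and follows essentially the same route as the paper: substitute the given $k$ into the order and factor-count formulas of Theorem~\ref{main}, observe that $\Gamma_2^{(i)}$ (resp.\ $\Gamma_1^{(i)}$) is a unit, and compare $\ord \Gamma^{(i)}(0,y)$ with the minimal possible order of a branch having the prescribed first characteristic exponents. Your explicit checks that $i=i_k$ and that the matching $y$-order excludes further characteristic exponents in case (1) are just slightly more detailed versions of steps the paper leaves implicit.
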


\noindent\begin{proof}
If $k=e_{i-1}-1$ then by the first and the third statements of Theorem \ref{main} we get $\ord  \Gamma_1^{(i)}(0,y)=\ord \Gamma^{(i)}(0,y)=n_1\cdots n_{i-1}$.

\noindent On the other hand, by the second statement of Theorem \ref{main}, the  first $i-1$ characteristic exponents of any branch $g$ of 
$\Gamma_1^{(i)}$ are $\frac{b_1}{b_0},\ldots,\frac{b_{i-1}}{b_0}$, and consequently the order of $g(0,y)$ is greater than or equal to $n_1\cdots n_{i-1}$. So there exists a unit $u\in \bC[[x,y]]$ such that $\Gamma^{(i)}=ug$ and $\Gamma^{(i)}$ is irreducible with 
$\ch \, \Gamma^{(i)}=\left \{\frac{b_1}{b_0},\ldots,\frac{b_{i-1}}{b_0}\right\}$.

\medskip 

\noindent If $k=e_{i-1}-n_i>0$ then  $\ord \Gamma^{(i)}(0,y)=\ord \Gamma_2^{(i)}(0,y)=n_1\cdots n_{i-1}$. By  the fifth statement of  Theorem \ref{main} we conclude that $\Gamma^{(i)}$ is irreducible and by the  second statement of  Theorem \ref{main} we get 
$\ch \, \Gamma^{(i)}=\left \{\frac{b_1}{b_0},\ldots,\frac{b_{i}}{b_0}\right\}$.
\end{proof}

\begin{Remark} 
\label{equisingular} The characteristic sequence  or equivalently the set of characteristic exponents determines the equisigularity class (in the sense of Zariski) of an irreducible singular curve. 
Since the contact orders of the irreducible power series~$f$ and the branches of its higher order polars $\frac{\partial^k} {\partial y^k}f(x,y)$, for  $k<e_{h-1}$ are precisely the characteristic exponents of $f$, they determine the equisingularity class of $f(x,y)=0$. 
The case $k=1$ is well-known after \cite[page 110]{Merle}.
 \end{Remark}
 
\medskip

\noindent Remember that if $f(x,y)\in \bC[[x,y]]$ is irreducible with $\ch \, f=\left\{\frac{b_1}{b_0}, \ldots, \frac{b_h}{b_0}\right\}$ then an irreducible power series $g$ is called an $i-1$-{\em semi-root} of $f$ if $\ch \, g=\left\{\frac{b_1}{b_0}, \ldots, \frac{b_{i-1}}{b_0}\right\}$ and  $\cont(f,g)=\frac{b_{i}}{b_0}$. In the language of resolution of singularities, a branch with characteristic contact is a semi-root if and only if its  strict transform is a curvetta of the divisor corresponding to an end vertex of valency1 (different to the root) of the dual resolution graph of $f(x,y)=0$.

\medskip

\noindent Assume that $1\leq i\leq i_k$. Proceeding as in the proof of Corollary \ref{coro1} we can show that 
if $k+1\equiv 0$ (mod $n_i$) then  $\Gamma^{(i)}_1$ is an $(i-1)$-semi-root of $f$.
\medskip

\noindent We call an irreducible power series $g$  an $i$-{\em threshold semi-root} of $f$ if $\ch \, g=\left\{\frac{b_1}{b_0}, \ldots, \frac{b_{i}}{b_0}\right\}$ and  $\cont(f,g)=\frac{b_{i}}{b_0}$. Remark that an  $i$-{\em threshold semi-root} of $f$ is not an $i$-semi-root since its contact with $f$ is  not hight enough.

\medskip

\noindent The irreducible factors of $\Gamma^{(i)}_2$ are  {\em $i$-threshold semi-roots}.

\medskip

\noindent Figure 2 is the schematic picture of the dual resolution graph of the curve $f\cdot \Gamma^{(i)}_2 \cdot f^{(i-1)}\cdot f^{(i)}$, where $f^{(j)}$ is a $j$-semi-root of $f$, $E_j$ denotes the $j$-th rupture point and $\overline g$ means the strict transform of $g=0$. Here we assume that $x=0$ and $f=0$ are transverse.

\begin{center}
\begin{picture}(100,70)(0,0){
\put(-40,0){\line(1,0){138}}}
\put(100,0){\vector(1,1){20}}\put(120,20){$\overline f$}
\put(-42,-2.5){$\bullet$}
\put(-14.5,0){\line(0,1){42}} \put(-17,38){$\bullet$}
\put(-17,-2.5){$\bullet$}\put(-17,-10){$E_1$} 
\put(20.5,0){\line(0,1){42}}
\put(18,-2.5){$\bullet$}\put(19,-10){$E_i$}
\put(20,-1){\vector(1,1){20}}
\put(20,-1){\vector(1,2){20}}
\put(38,22){$\vdots$} 
\put(41.5,25){$\overline{\Gamma^{(i)}_2}$} 
\put(18,38){$\bullet$} 
\put(59.5,0){\line(0,1){42}}
\put(57,-2.5){$\bullet$}\put(56,-10){$E_{i+1}$}    
\put(57,38){$\bullet$}    
\put(59,40){\vector(1,1){20}}\put(79,60){$\overline{f^{(i)}}$} 
\put(20,40){\vector(1,1){20}}\put(40,60){$\overline{f^{(i-1)}}$} 
\put(-4,38){$\dots$} 
\put(99.5,0){\line(0,1){42}}          
\put(74,38){$\dots$}         
\put(97,-2.5){$\bullet$}\put(96,-10){$E_h$}                   
\put(97,38){$\bullet$}       
\put(39,-25.5) {\hbox{\rm Figure 2}} ;              
\end{picture}
\end{center}

\vspace{0.6cm}

\begin{Example}
\label{Ex1}
Consider $f(x,y)=(y^3-x^4)^2-x^9\in \bC[[x,y]]$. 
The curve $f(x,y)=0$ is irreducible of characteristic $(b_0,b_1,b_2)=(6,8,11)$.
Then for the first partial derivative 
$\frac{\partial} {\partial y}f(x,y)=\Gamma^{(1)}\Gamma^{(2)}$
where  $\Gamma^{(1)}=6y^2$, $\Gamma^{(2)}=y^3-x^4$.
If $2\leq k \leq 5$ then $\frac{\partial^k}{\partial y^k}f(x,y)=\Gamma^{(1)}$. 
For $k=2$ we have $\Gamma^{(1)}=\Gamma^{(1)}_1\Gamma^{(1)}_2$, where $\Gamma^{(1)}_1=6y$ and $\Gamma^{(1)}_2=5y^3-2x^4$. 
For $k=3$ the factor $\Gamma^{(1)}_1$ is a unit, while for $k\in \{4,5\}$ the factor $\Gamma^{(1)}_2$ is a unit. 
In Figure 3  the dual resolution graph of the curve $y\cdot (y^3-x^4) \cdot (5y^3-2x^4) \cdot f$ is drawn. Here $E_j$ denotes the $j$th-divisor and $\overline g$ means the strict transform of $g=0$.

\begin{center}
\begin{tikzpicture}[scale=2]
\draw [-](0,0) -- (2,0); 
\node[draw,circle,inner sep=1.5pt,fill=black] at (0,0){};  
\node[draw,circle,inner sep=1.5pt,fill=black] at (1,0) {};
\node[draw,circle,inner sep=1.5pt,fill=black] at (2,0) {};
\draw [-](1,0) -- (1,1); 
\node[draw,circle,inner sep=1.5pt,fill=black] at (1,0.5) {};
\node[draw,circle,inner sep=1.5pt,fill=black] at (1,1) {};
\draw [-](2,0) -- (2,1); 
\node[draw,circle,inner sep=1.5pt,fill=black] at (2,0.5) {};
\node[draw,circle,inner sep=1.5pt,fill=black] at (2,1) {};
\draw (0,-0.1) node[below]{$E_1$} ;
\draw (1,-0.1) node[below]{$E_4$} ;
\draw (2,-0.1) node[below]{$E_7$} ;
\draw (1,0.5) node[left]{$E_3$} ;
\draw (1,1) node[left]{$E_2$} ;
\draw (2,0.5) node[left]{$E_6$} ;
\draw (2,1) node[left]{$E_5$} ;
\draw [->](2,0)--(2.5,-0.5);
\draw [->](1,0) -- (1.5,-0.5); 
\draw [->](1,1) -- (1.5,0.5); 
\draw [->](2,1) -- (2.5,0.5); 
\draw (2.5,-0.5) node[right]{$\overline f$} ;
\draw (1.5,-0.5) node[below]{$\overline{5y^3\!-\!2x^4}$} ;
\draw (1.5,0.5) node[below]{$\overline{y}$} ;
\draw (2.5,0.5) node[right]{$\overline{y^3\!-\!x^4}$} ;
\draw (1.5,-1.2) node[above]{\hbox{\rm Figure 3}} ;

\end{tikzpicture}
\end{center}
\end{Example}

\medskip

\noindent {\bf Acknowledgements:} The authors thank Bernard Teissier 
for suggesting  the name {\em threshold semi-root}.

\medskip

\noindent {\small Evelia Rosa Garc\'{\i}a Barroso\\
Departamento de Matem\'aticas, Estad\'{\i}stica e I.O.\\
Secci\'on de Matem\'aticas, Universidad de La Laguna\\
Apartado de Correos 456\\
38200 La Laguna, Tenerife, Espa\~na\\
e-mail: ergarcia@ull.es}

\medskip

\noindent {\small   Janusz Gwo\'zdziewicz\\
Institute of Mathematics\\
Pedagogical University of Cracow\\
Podchor\c a{\accent95 z}ych 2\\
PL-30-084 Cracow, Poland\\
e-mail: gwozdziewicz@up.krakow.pl}
\end{document}